\documentclass[11pt]{amsart}
\usepackage{amsmath, amsthm, amssymb, latexsym,url}
\usepackage{hyperref}
\usepackage[mathscr]{eucal}

\addtolength{\oddsidemargin}{-.5in}
\addtolength{\evensidemargin}{-.5in}
\addtolength{\textwidth}{1in}


\author[D. Airey]{Dylan Airey}
\address[D. Airey]{
Department of Mathematics, University of Texas at Austin, 2515 Speedway, Austin, TX 78712-1202, USA}
\email{dylan.airey@utexas.edu}

\author[B. Mance]{Bill Mance}
\address[B. Mance]{Department of Mathematics, University of North Texas, General Academics Building 435, 1155 Union Circle,  \#311430, Denton, TX 76203-5017, USA}
\email{mance@unt.edu}

\author[J. Vandehey]{Joseph Vandehey}
\address[J. Vandehey]{Department of Mathematics, University of Georgia at Athens, Boyd graduate studies research center, Athens, GA 30606 USA}
\email{vandehey@uga.edu}

\thanks{Research of the first and second authors is partially supported by the U.S. NSF grant DMS-0943870.  }

\title{Normal number constructions for Cantor series with slowly growing bases}
\date{\today}
\subjclass[2010]{Primary: 11K16}

\newtheorem{thm}{Theorem}[section]

\newtheorem{lem}[thm]{Lemma}

\newtheorem{defn}[thm]{Definition}

\newcommand{\labeq}[1]{\label{eq:#1}}
\newcommand{\refeq}[1]{(\ref{eq:#1})}

\newcommand{\reft}[1]{Theorem~\ref{thm:#1}}

\newcommand{\N}{\mathbb{N}}
\newcommand{\br}[1]{\left \{ #1 \right \}}
\newcommand{\NQ}{\mathscr{N}(Q)}
\newcommand{\DNQ}{\mathscr{DN}(Q)}
\newcommand{\RNQ}{\mathscr{RN}(Q)}
\newcommand{\floor}[1]{\left\lfloor #1 \right\rfloor} 
\newcommand{\ceil}[1]{\left\lceil #1 \right\rceil}
\newcommand{\pr}[1]{\left ( #1 \right )}

\allowdisplaybreaks

\begin{document}

\maketitle

\begin{abstract}
Let $Q=(q_n)_{n=1}^\infty$ be a sequence of bases with $q_i\ge 2$. In the case when the $q_i$ are slowly growing and satisfy some additional weak conditions, we provide a construction of a number whose $Q$-Cantor series expansion is both $Q$-normal and $Q$-distribution normal. Moreover, this construction will result in a computable number provided we have some additional conditions on the computability of $Q$, and from this construction we can provide computable constructions of numbers with atypical normality properties.
\end{abstract}

\section{Introduction}\label{section:introduction}

A real number $x$ has a unique base $b$ expansion of the form
\begin{equation}\label{eq:bdef}
x=a_0+\sum_{n=1}^\infty \frac{a_n}{b^n}
\end{equation}
where $a_0=\floor{x}$ and the digits $a_n$ satisfy $a_n \in \{0,1,2,\dots,b-1\}$ and $a_n\neq b-1$ infinitely often. This number is said to be normal to base $b$ if for every finite sequence $(c_j)_{j=1}^k$ with $c_j \in \{0,1,2,\dots,b-1\}$, we have
\[
\lim_{n\to \infty} \frac{\#\{1\le i \le n \mid c_j =a_{i+j-1}, 1\le i \le k\}}{n} = \frac{1}{b^k}.
\]
This definition says that a number is normal when each string of digits appears with the frequency one would expect if the digits were chosen at random. Equivalently, one could say that the sequence $(b^k x)_{k=0}^\infty$ is uniformly distributed modulo $1$.

Although almost all real numbers are normal to base $b$, very few examples of such numbers are known, and those examples that are known are numbers that were explicitly constructed to be normal. One of the very first such constructions was due to Champernowne \cite{Champernowne}, who showed that the number
\[
0.12345678910111213141516\dots,
\]
formed by concatenating all the integers, was normal to base $10$. 

There are, of course, many different ways of representing a real number, such as continued fraction expansions and beta expansions, each with their own definitions of normality. Here, we are interested in the $Q$-Cantor series expansion. The study of normal numbers and other statistical properties of real numbers with respect to large classes of Cantor series expansions was  first done by P. Erd\H{o}s and A. R\'{e}nyi in \cite{ErdosRenyiConvergent} and \cite{ErdosRenyiFurther}, by A. R\'{e}nyi in \cite{RenyiProbability}, \cite{Renyi}, and \cite{RenyiSurvey}, and by P. Tur\'{a}n in \cite{Turan}.

The $Q$-Cantor series expansions, first studied by G. Cantor in \cite{Cantor},
are a natural generalization of the $b$-ary expansions.\footnote{G. Cantor's motivation to study the Cantor series expansions was to extend the well known proof of the irrationality of the number $e=\sum 1/n!$ to a larger class of numbers.  Results along these lines may be found in the monograph of J. Galambos \cite{Galambos}. } 
A basic sequence is a sequence of integers greater than or equal to $2$.
Given a basic sequence $Q=(q_n)_{n=1}^{\infty}$, the {\it $Q$-Cantor series expansion} of a real number $x$  is the (unique)\footnote{Uniqueness can be proven in the same way as for the $b$-ary expansions.} expansion of the form
\begin{equation} \labeq{cseries}
x=E_0+\sum_{n=1}^{\infty} \frac {E_n} {q_1 q_2 \cdots q_n}
\end{equation}
where $E_0=\floor{x}$ and $E_n$ is in $\{0,1,\cdots,q_n-1\}$ for $n\geq 1$ with $E_n \neq q_n-1$ infinitely often. We abbreviate \refeq{cseries} with the notation $x=E_0.E_1E_2E_3\cdots$ w.r.t. $Q$.

Definitions of normality for $Q$-Cantor series require a few more definitions. Given a block of digits $B=[B_1,B_2,B_3,\dots, B_k]$, define
\[
N_n^Q(B,x) = \#\{1 \le i\le n \mid E_{i+j-1} = B_j, 1\le j \le k\},
\]
so that $N_n^Q(B,x)$ counts the number of times a given block of digits appears in the $Q$-cantor expansion for $x$ up to the $n$th place. Moreover let
\[
I_i(B) = \begin{cases}
1, & B_j<q_{i+j-1}, \quad 1\le j \le k,\\
0, & \text{otherwise,}
\end{cases}
\]
so that $I_i(B)$ detects whether or not the digit block $B$ can even occur at the $i$th place in the $Q$-Cantor expansion for some point $x$. 

We let $|B|=k$ denote the length of the block $B$. For a block $B$ of length $k$ define
\[
Q_n(B) = \sum_{i=1}^n \frac{I_i(B)}{q_i q_{i+1} \dots q_{i+k-1}},
\]
which may be interpreted as the expected number of times to see the block $B$ in the first $n$ digits of a $Q$-Cantor series expansion if every digit $E_i$ is chosen at random from the set $\{0,1,\dots,q_i-1\}$. 
We also define
$$
T_{Q,n}(x) = q_n q_{n-1} \cdots q_1 x \pmod{1}.
$$

A real number $x$  is {\it $Q$-normal} if\footnote{We choose to take a slightly different definition for $Q$-normality than is used elsewhere in the literature. Our definition is more appropriate for bounded basic sequences.} for all blocks $B$ regardless of length such that $\lim_{n \to \infty} Q_n(B) = \infty$, we also have
$$
\lim_{n \rightarrow \infty} \frac {N_n^Q (B,x)} {Q_n(B)}=1.
$$
Let $\NQ$ be the set of $Q$-normal numbers.  As with the definition for base $b$ normality, this says, in essence, that the number of times a block of digit appears is the expected frequency if the digits were chosen at random. In fact, if we let $q_i = b$ for all $i$, this definition is precisely the definition for a base $b$ normal number.
The real number $x$ is {\it $Q$-ratio normal} (here we write $x \in \RNQ$) if for all blocks $B_1$ and $B_2$ of equal length where $\lim_{n \to \infty} Q_n(B_1) =\lim_{n \to \infty} Q_n(B_2) = \infty$ we have
$$
\lim_{n \to \infty} \frac {N_n^Q (B_1,x)} {N_n^Q (B_2,x)}=1.
$$
A real number~$x$ is {\it $Q$-distribution normal} if
the sequence $(T_{Q,n}(x))_{n=0}^\infty$ is uniformly distributed mod $1$. Again, if $q_i=b$ for all $i$, this definition is precisely the equivalent definition for a base $b$ normal number. Let $\DNQ$ be the set of $Q$-distribution normal numbers.  The relationship between $\NQ, \RNQ$, and $\DNQ$ is discussed in \cite{AireyManceHDDifference} and \cite{ppq1} but is not fully understood: for example, unlike for base $b$ expansions, there exist $Q$ such that $\NQ$ and $\DNQ$ are not the same set, although it is not known for which $Q$ this holds.

There are a number of classical results about base $b$ normality which do not yet have analogues for $Q$-Cantor expansion normality. Even the simplest question, asking for an example of a $Q$-normal number for any reasonable $Q$, is unanswered in many cases. Altomare and Mance \cite{AlMa} and Mance independently \cite{Mance} started with a set of data satisfying certain conditions and used this to generate both a sequence $Q$ and a number $x$ that was $Q$-normal. In their constructions, the sequence $Q$ was constant for very long stretches at a time. Most other constructions that have been found thus far, such as those in \cite{AireyManceVandehey}, also put very stringent restrictions on what $Q$ are allowed. Perhaps the most general $Q$-normal construction comes from \cite{AireyMancePeriodic}: there, the authors show that if $Q$ is eventually periodic, then there is some integer $b$ such that being $Q$-normal is equivalent to being base $b$ normal, and thus constructions of base $b$ normal numbers give $Q$-normal numbers in this case.

The first main result of this paper is the following, which provides a $Q$-normal number construction for a much broader set of basic sequences $Q$.

\begin{thm}\label{thm:main}
Let $Q$ be a basic sequence that satisfies the following two conditions:
\begin{itemize}
\item $Q$ is slowly growing; that is, if we let $q(n)=\max_{i\le n} q_i$, then $q(n) = n^{o(1)}$; and,
\item For any block of digits $B$ such that $\lim_{n\to \infty} Q_n(B) = \infty$ we have
\[
\lim_{n\to \infty} \frac{Q_n(B)}{n \log q(n)/\log n} = \infty.
\]
\end{itemize}
Then the number $x_Q$ constructed in Section \ref{sec:construction} is both $Q$-normal and $Q$-distribution normal.
\end{thm}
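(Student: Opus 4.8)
The plan is to construct $x_Q$ by a concatenation (or ``stringing together'') argument, as in the classical Champernowne construction, but adapted so that the single number simultaneously satisfies both the combinatorial ($Q$-normality) and equidistribution ($Q$-distribution normality) requirements. First I would build $x_Q$ in blocks: on the $m$th block of digit positions, I would write down, in some canonical order, all admissible digit strings of length $\ell_m$ that can occur at those positions (so that $I_i(B)=1$ is respected), where $\ell_m\to\infty$ slowly. Because $Q$ is slowly growing, $q(n)=n^{o(1)}$, the number of admissible strings of a given length at a given scale is subpolynomial in $n$, so a full enumeration of length-$\ell_m$ strings occupies only $o(n)$ of the first $n$ positions if $\ell_m$ grows slowly enough; this is exactly what lets one fit in longer and longer ``Champernowne words'' without them diluting the count. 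The second hypothesis, the lower bound $Q_n(B)\gg n\log q(n)/\log n\cdot\omega(n)$ with $\omega(n)\to\infty$, is the quantitative engine: it guarantees that $Q_n(B)$ grows fast enough that the error terms coming from block boundaries, from the finite transient before a given length-$k$ block starts being enumerated, and from the overhead of the distribution-normality gadget, are all $o(Q_n(B))$.

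Next I would verify $Q$-normality. Fix a block $B$ with $Q_n(B)\to\infty$, say $|B|=k$. Inside each enumeration block of word-length $\ell_m\ge k$, every admissible length-$\ell_m$ word appears exactly once, so by a standard counting argument the number of occurrences of $B$ among those positions is, up to boundary effects of size $O(\ell_m\cdot(\#\text{words}))$, equal to the ``expected'' count, i.e. matches $Q_{(\cdot)}(B)$ restricted to that block. Summing over blocks $m$ up to the one containing position $n$, the main terms telescope to $(1+o(1))Q_n(B)$, while the accumulated boundary errors are bounded by $\sum_m \ell_m\cdot(\#\text{length-}\ell_m\text{ words})$, which is $O(n\log q(n)/\log n)$ by the slow-growth bound on word lengths; dividing by $Q_n(B)$ and invoking the second hypothesis (that $Q_n(B)/(n\log q(n)/\log n)\to\infty$) kills this error, giving $N_n^Q(B,x_Q)/Q_n(B)\to 1$. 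The one subtlety is handling the ``partial'' final block and blocks with $\ell_m<k$: for these one shows their total length is $o(Q_n(B))$, again using the second hypothesis, so they contribute negligibly.

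For $Q$-distribution normality I would show $(T_{Q,n}(x_Q))_{n\ge0}$ is uniformly distributed mod $1$ by checking, via Weyl's criterion or directly via a Wiener--Schoenberg / interval-counting argument, that for each $j\ge1$ the proportion of $n\le N$ with $T_{Q,n}(x_Q)\in[a,b)$ tends to $b-a$. The key observation is that $T_{Q,n}(x_Q)$ is determined by the tail digits $E_{n+1}E_{n+2}\cdots$ of $x_Q$ w.r.t.\ $Q$, and within each enumeration block these tails cycle through (a rescaling of) all admissible words in a structured way; one needs to insert, between the pure Champernowne blocks, short ``randomizing'' segments — e.g.\ a segment that runs a prescribed periodic or odometer-like pattern — whose job is to ensure the sequence of tails sweeps out the whole interval $[0,1)$ evenly rather than clustering. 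Because these randomizing segments can be taken of length $o(n/\log n)$, they do not disturb the $Q$-normality count established above. This coordination — designing one construction whose block structure is simultaneously ``Champernowne-like enough'' for the frequency statement and ``equidistributed enough'' for the $T_{Q,n}$ statement, with all three sources of error (normality boundary terms, short-word blocks, randomizing segments) provably swallowed by the single quantitative hypothesis $Q_n(B)\gg n\log q(n)/\log n$ — is the main obstacle, and the bulk of Section~\ref{sec:construction} and the proof will be the careful bookkeeping that makes these competing demands compatible.
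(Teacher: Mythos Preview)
Your outline for $Q$-normality is essentially the paper's argument: partition the digit positions into growing blocks of length $r$, enumerate all admissible words on each block, and bound the boundary contributions by $O_k(n/r)$; then show $r\gg \log n/\log q(n)$ so that $n/r=o(Q_n(B))$ via the second hypothesis. The paper formalizes this with a decomposition into three asymptotics, $Q_n(B)\sim Q_n^*(B)$, $N_n^Q(B,x)=N_n^{Q*}(B,x)+o(Q_n(B))$, and $N_n^{Q*}(B,x)\sim Q_n^*(B)$, where the starred quantities ignore occurrences that straddle two $R_{j,r}$ blocks; your ``boundary errors'' are exactly the differences $Q_n-Q_n^*$ and $N_n^Q-N_n^{Q*}$.

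Where you diverge from the paper is in the treatment of $Q$-distribution normality. You propose inserting extra ``randomizing segments'' between enumeration blocks to force equidistribution of the tails, and then arguing that these segments are short enough not to disturb the normality count. The paper does \emph{not} do this: the very same enumeration, with no additional gadgets, already yields $Q$-distribution normality. The mechanism is that the block length $r$ is chosen via $n_r=\min\{n:(q(n)^2+1)^r\le n\}$, which forces each block type $R=[R_1,\dots,R_{r'}]$ to recur at least about $q(n)^{r'}$ times (since the number of positions between $N_{r'}$ and $N_{r'+1}$ dominates $q(n)^{2r'}$ while the number of block types is at most $q(n)^{r'}$). Hence the digits assigned to the copies of $R$ run through the full cycle of all $\mathcal{R}=R_1\cdots R_{r'}$ admissible words many times. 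One then truncates $T_{Q,m}(x_Q)$ to its first $\lfloor\sqrt{r}\rfloor$ digits to get $x_m$, observes $|x_m-T_{Q,m}(x_Q)|\le 2^{-\lfloor\sqrt{r}\rfloor}\to 0$, and counts $\#\{m\le n:x_m\in\mathcal{I}\}$ by the same block-by-block method used for $Q$-normality; the error is again $O(r^2 q(n)^{2r})\ll n/r=o(n)$.

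So your proposal is not wrong in spirit, but it misidentifies the main obstacle. There is no competing demand to coordinate: the single Champernowne-type enumeration, with block lengths calibrated by $(q(n)^2+1)^r\le n$, delivers both conclusions simultaneously. Your randomizing segments are unnecessary, and introducing them would only add bookkeeping (you would then have to prove they do not affect $N_n^Q(B,x)/Q_n(B)$, which is extra work the paper avoids entirely).
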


The number $x_Q$ that we construct is an explicit example. To define what we mean by an explicit example, we bring in some definitions from recursion theory.  A real number $x$ is \textit{computable} if there exists $b \in \mathbb{N}$ with $b\geq 2$ and a total recursive function $f: \mathbb{N} \to \mathbb{N}$ that calculates the digits of $x$ in base $b$. A sequence of real numbers $(x_n)$ is \textit{computable} if there exists a total recursive function $f: \mathbb{N}^2 \to \mathbb{Z}$ such that for all $m,n$ we have that $\frac{f(m,n)-1}{m} < x_n< \frac{f(m,n)-1}{m}$. 

M. W. Sierpi\'{n}ski gave an example of an absolutely normal number that is not computable in \cite{Sierpinski}.
The authors feel that examples such as M. W. Sierpi\'{n}ski's are not fully explicit since they are not computable real numbers, unlike Champernowne's number. 
A. M. Turing gave the first example of a computable absolutely normal number in an unpublished manuscript.  This paper may be found in his collected works \cite{Turing}. See \cite{BecherFigueiraPicchi} by V. Becher, S. Figueira, and R. Picchi for further discussion.

We will also show the following: 
\begin{thm} \label{thm:comp}
If $Q$ satisfies the conditions of Theorem \ref{thm:main} and $(q_n)_{n=1}^\infty$ and $(q(n))_{n=1}^\infty$ are computable sequences of integers, then the $x_Q$  is computable.
\end{thm}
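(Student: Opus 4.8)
The plan is to trace through the construction of $x_Q$ carried out in Section~\ref{sec:construction} and to check that every step of it is effective once we are handed total recursive functions computing $(q_n)_{n=1}^\infty$ and $(q(n))_{n=1}^\infty$. The construction proceeds stagewise: at stage $k$ one appends a finite block of $Q$-Cantor digits to what has been produced so far, and the choice of that block depends only on finitely much data --- the bases $q_1,\dots,q_{M_k}$ for an index $M_k$ determined by the current length, the value $q(M_k)$, and the finite combinatorial bookkeeping accumulated so far (which blocks $B$ up to the current, slowly growing, length bound are being tracked, their running counts $N_n^Q(B,\cdot)$, and the target quantities $Q_n(B)$). I would first record that each of these ingredients is computable uniformly: $I_i(B)\in\{0,1\}$ is decidable from $q_i,\dots,q_{i+|B|-1}$, so $Q_n(B)=\sum_{i=1}^n I_i(B)/(q_iq_{i+1}\cdots q_{i+|B|-1})$ is a finite sum of rationals and hence a computable rational; the counts $N_n^Q(B,\cdot)$ are read off from finitely many already-produced digits; and, crucially, no step of the construction branches on the (non-effective) question of whether $Q_n(B)\to\infty$, only on comparisons among the finite quantities just listed. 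It follows that the function sending a stage number to the block of digits it contributes is total recursive, and hence so is the function $n\mapsto E_n$ returning the $n$-th $Q$-Cantor digit of $x_Q$.

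Next I would pass from computability of the digit sequence $(E_n)$ to computability of the real number $x_Q$ itself. The integer part $E_0$ is fixed by the construction (indeed $x_Q\in[0,1)$), so the partial sums
\[
r_N=\sum_{n=1}^N\frac{E_n}{q_1q_2\cdots q_n}
\]
are rationals computable from the finite data $E_1,\dots,E_N$ and $q_1,\dots,q_N$, and since $q_i\ge 2$ for all $i$ we obtain the effective error bound $0\le x_Q-r_N<(q_1q_2\cdots q_N)^{-1}\le 2^{-N}$. Thus $(r_N)$ is a computable sequence of rationals converging to $x_Q$ with an explicit modulus of convergence. To match the definition of computable real number used in the paper --- a total recursive function producing the base-$b$ digits of $x_Q$ for some fixed $b\ge 2$ --- I would take $b=2$ and extract binary digits from the $r_N$: to determine the $k$-th digit, search for an $N$ with both $r_N$ and $r_N+2^{-N}$ lying strictly inside a single half-open interval $[\,j/2^{k},(j+1)/2^{k})$. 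The only point making this search legitimate is that $x_Q$ must not itself be a dyadic rational; this holds because $x_Q$ is $Q$-distribution normal by Theorem~\ref{thm:main} (whose hypotheses are in force here), hence irrational, so the search always terminates and the digit is then read off unambiguously. Assembling these pieces yields the required total recursive function.

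The main obstacle is the bookkeeping of the first paragraph: one has to return to the construction in Section~\ref{sec:construction} and confirm that at every stage it consults only finitely much data, all of it computable from $(q_n)$ and $(q(n))$, and never needs to decide a genuinely non-effective predicate. Once that verification is in place, the remaining steps --- the approximation estimate and the digit extraction --- are routine.
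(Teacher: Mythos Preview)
Your overall strategy---verify that the digit map $n\mapsto E_n$ is total recursive by tracing the construction of Section~\ref{sec:construction}, then pass from the computable digit sequence to a computable real---is exactly the paper's approach. However, your first paragraph misdescribes the construction: it does \emph{not} track running counts $N_n^Q(B,\cdot)$ or the targets $Q_n(B)$ at any stage. Those quantities appear only in the proof that $x_Q$ is normal (Section~3), not in its definition. The actual bookkeeping is purely about the base sequence: one needs $n_r$ (the least $n$ with $(q(n)^2+1)^r\le n$), then $N_r$, then for a given position $n$ the value $r=r(n)$ with $N_r<n\le N_{r+1}$, the block $R_{j,r}=[q_{N_r+jr+1},\dots,q_{N_r+(j+1)r}]$ containing position $n$, and the number of earlier $j'\le j$ with $R_{j',r}=R_{j,r}$; from that count modulo $R_1\cdots R_r$ one reads off which lexicographic digit block to place and hence $E_n$. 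The paper carries out precisely this verification by giving explicit Turing-machine descriptions for $(n_r)$, $(N_r)$, and $(E_n)$ in turn.

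This is not a fatal gap---once you correct your account of what the construction is, your plan succeeds, and indeed the real construction is simpler than what you imagined, since no branching on $Q_n(B)$ ever occurs. Your second paragraph (approximating by the rational partial sums with error $\le 2^{-N}$ and extracting binary digits, invoking irrationality of $x_Q$ via its $Q$-distribution normality to guarantee the search terminates) is in fact more careful than the paper's one-line conclusion on that point.
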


In \cite{ppq1} the second author showed that for any basic sequence $Q$ that is infinite in limit such that $Q_n(B) \to \infty$ for each admissable block $B$ the set $\NQ \setminus \DNQ$ is non-empty. He also showed that $\RNQ \setminus \NQ$ is non-empty only assuming $Q$ is infinite in limit. In \cite{AireyManceHDDifference} the first and second authors improved this result and showed that if $Q$ is infinite in limit, the set $\RNQ \cap \DNQ \setminus \NQ$ has full Hausdorff dimension.

Along these lines will be able to provide constructions of computable real numbers that are in sets such as $\NQ \backslash \DNQ$ and $\RNQ \cap \DNQ \backslash \NQ$ under the same or slightly stronger assumptions than those of Theorem \ref{thm:comp}.

\subsection{Notations}

We will use asymptotic notations with their standard meaning. By $f(x)=O(g(x))$ or, equivalently, $f(x) \ll g(x)$, we mean that there is some constant $C$ such that $|f(x)|\le C g(x)$. By $f(x)\asymp g(x)$, we mean $f(x) = O(g(x))$ and $g(x)=O(f(x))$. By $f(x) = o(g(x))$, we mean that $\lim_{x\to \infty} f(x)/g(x) = 0$. By $f(x) \sim g(x)$, we mean that $f(x) = g(x)(1+o(1))$ or, equivalently, $\lim_{x\to \infty} f(x)/g(x) =1$.

\section{The construction}\label{sec:construction}

We need some additional definitions. Given two blocks of integers $A=[a_1,a_2,\dots,a_k]$ and $B=[b_1,b_2,\dots,b_k]$ (which could be blocks of digits, $E_i$ or blocks of bases, $q_i$), we say that $A<B$ if $a_i < b_i$ for $1\le i \le k$ (and we make an analogous definition for $A\le B$).

For a given integer $r$, let $n_r$ denote the smallest integer $n$ such that $(q(n)^2+1)^r\le n$. By the assumption that $q_n=n^{o(1)}$, the integer $n_r$ always exists. Let $(N_r)_{r=0}^\infty$ be an increasing sequence of non-negative integers defined so that $N_1=0$ and all the $N_{r+1}$'s are defined inductively as being the greatest integer less than  $n_{r+1}$ such that $N_{r+1}-N_{r}$ is divisible by $r$. In particular, we will have $N_{r} =n_r+O(r)$.

Divide the bases of $Q$ from the $N_{r}+1$st base to the $N_{r+1}$th base into $(N_{r+1}-N_{r})/r$ blocks of $r$ consecutive bases, namely the blocks 
\[
R_{j,r}=[q_{N_{r}+jr+1},q_{N_{r}+jr+2},\dots,q_{N_{r}+(j+1)r}], \quad 0\le j <(N_{r+1}-N_{r})/r.
\]
Let $R=[R_1,R_2,R_3,\dots,R_r]$ be a block of $r$ bases that equals the block $R_{j,r}$ for some $j$; and let ${B}_i={B}_i(R)$, $1\le i \le R_1R_2\dots R_r$, be the sequence of all the possible digit blocks of length $r$ such that $B_i<R$, arranged in lexicographical order (i.e., starting with $B_1=[0,0,\dots,0]$ then $B_2=[0,0,\dots,0,1]$ and so on, ending with $B_{R_1R_2\dots R_{r-1}} = [R_1-1,R_2-1,\dots,R_r-1]$). 

Let us define the number $x_Q\in [0,1)$ by its digits in the following way. For any fixed $R=[R_1,R_2,\dots,R_r]$, let $j_1$ be the smallest $j$ such that $R=R_{j,r}$, $j_2$ be the next smallest $j$ such that $R=R_{j,r}$, and so on. First, define the digits of $x_Q$ corresponding to the bases $R_{j_1,r}$ to be $B_1$, so that $E_{N_r+j_1r+i}(x) = 0$ for $1\le i \le r$. Then, define the digits corresponding to the basees $R_{j_2,r}$ be $B_2$, and so on, so the digits corresponding to $R_{j_i,r}$ will be $B_{i\pmod{R_1R_2\dots R_r}}$.

\section{Proof of Theorem \ref{thm:main}}

We focus first on showing that $x_Q$ is $Q$-normal.

Let $B=[b_1,b_2,\dots,b_k]$ be an arbitrary block of digits such that $Q_n(B) = \infty$. To show that $x$ is $Q$-normal, we must show that
\[
N_n^Q(B,x)= Q_n(B) (1+o(1)).
\]

Let $N_n^{Q*}(B,x)$ be defined similarly to $N_n^Q(B,x)$, but have it only count those appearances of $B$ which occur up to the $n$th place within the digits corresponding to a single $R_{j,r}$, and not beginning in some $R_{j,r}$ and terminating in a different $R_{j',r'}$. Likewise, let $Q_n^*(B)$ be defined by \[ \sideset{}{^*}\sum_{i=1}^n \frac{I_i(B)}{q_i q_{i+1} \dots q_{i+k-1}}, \] where the starred sum only runs over those $i$ for which $[q_i,q_{i+1},\dots,q_{i+k-1}]$ is a sub-block of $R_{j,r}$ for some $j,r$.

To prove that $x_Q$ is $Q$-normal, it suffices to show the following three asymptotic equalities:
\begin{align}
Q_n(B) &\sim Q_n^*(B),\label{eq:1}\\
N_n^Q(B,x) &= N_n^{Q*}(B,x)+o(Q_n(B)), \text{ and}\label{eq:2}\\
N_n^{Q*}(B,x) &\sim Q_n^* (B).\label{eq:3}
\end{align}

Let $n$ be a large integer, and let $r=r(n)$ be defined by $N_{r}<n\le N_{r+1}$.

\subsection{Proof of \eqref{eq:1}}

The difference $Q_n(B)-Q_n^*(B)$ is at most the sum
\[
\sum  \frac{1}{q_i q_{i+1} \dots q_{i+k-1}}
\] where the sum runs over all $i\le n$ such that the sub-blocks $[q_i,q_{i+1},\dots,q_{i+k-1}]$ start in some $R_{j_1,r_1}$ and end in another $R_{j_2,r_2}$. Each summand is at most $1/2^k$, so if we can show that the number of summands is $o(Q_n(B))$, we would have shown \eqref{eq:1}.

So let us count how many sub-blocks of $Q$ of length $k$ up to the $n$th place start in some $R_{j_1,r_1}$ and end in another $R_{j_2,r_2}$. Clearly every sub-block that starts before the $N_{k}$th place satisfies this condition. Each remaining sub-block occurs starting in one of the last $k-1$ places of a block of the form $R_{j,r}$ with $r\ge k$. Thus, at worst, the number of such sub-blocks is at most
 \begin{align*}
&(k-1) \left\lceil \frac{n-N_{r}}{r} \right\rceil + (k-1) \frac{N_{r}-N_{r-1}}{r-1} + \dots + (k-1) \frac{N_{k+1}-N_{k}}{k}+N_{k}\\
&\qquad \le (k-1)+\frac{k-1}{r} n + \frac{k-1}{r(r-1)} N_{r} + \frac{k-1}{(r-1)(r-2)} N_{r-1} + \dots +\frac{k-1}{(k+1)k} N_{k+1} \\
&\qquad\qquad+\frac{1}{k} N_{k}\\
&\qquad \le \frac{k-1}{r} n + (k-1)\sum_{i=2}^{r} \frac{N_i}{i(i-1)} + O_k(1).
\end{align*}
By definition, we have that $n_i/n_{i-1} \ge 5$, so that $N_i/N_{i-1} \ge 4$ for sufficiently large $i$. Thus, there exists a uniform constant $C$ such that 
\[
\frac{N_i}{i(i-1)} \le \frac{1}{C} \frac{N_{i+1}}{(i+1)i},
\]for all $i\ge 0$. So we have 
\[
\sum_{i=2}^{r} \frac{N_i}{i(i-1)} \ll \frac{N_{r}}{r(r-1)} \ll \frac{n}{r(r-1)}.
\]
Thus the number of these sub-blocks is at worst 
\begin{equation*}
\frac{k-1}{r}n + \frac{k-1}{r(r-1)}n + O_k(1) = O_k\left( \frac{n}{r} \right).
\end{equation*}
At this point, to show that this is $o(Q_n(B))$, we must show that $r \gg \log n/\log q(n)$.

Since we defined $r$ by $ n \le N_{r+1}$, we have
\[
n \le (q(n_{r+1})^2+1)^r+O(r).
\]
By taking logarithms, we obtain
\[
r \gg \frac{\log n}{\log q(n_{r+1})} 
\]
Since $n_{r+1}\ge N_{r+1}\ge n$ and $q(n)$ is a non-decreasing function, the desired asymptotic inequality follows.

\subsection{Proof of \eqref{eq:2}}

The difference $N_n^{Q}(B,x) - N_n^{Q*}(B,x)$ is at most the number of sub-blocks of $Q$ of length $k$ up to the $n$th place that  start in some $R_{j_1,r_1}$ and end in another $R_{j_2,r_2}$.  By the argument of the previous section, this difference is at most $o(Q_n(B))$.

\subsection{Proof of \eqref{eq:3}}\label{sec:prooflastasymp}

Consider a sub-block $R=[R_1,R_2,\dots,R_{r'}]$ with $r'\le r$ and an integer $i$ such that $1\le i \le r'-k+1$ and $B < [R_i, R_{i+1},\dots,R_{i+k-1}]$. Let $\mathcal{R}=R_1R_2\dots R_{r'}$ and $\mathcal{R}_i=R_i R_{i+1}\dots R_{i+k-1}$. For any $\mathcal{R}$ consecutive $j$'s for which $R=R_{j,r'}$, the corresponding digits of $x$ will run through all possible blocks of digits exactly once, and thus the digits of $B$ appear in the $i$th place of these sub-blocks exactly $\mathcal{R}/\mathcal{R}_i$ times.

Let $J_{R,n}$ denote the number of $j$ such that $R=R_{j,r'}$ with all the bases of $R_{j,r'}$ occurring before the $n$th place. (We will say that $R_{j,r'}$ occurs completely before the $n$th place.) By the argument of the previous paragraph, the number of times the digits $B$ occur in the $i$th place of the blocks $R_{j,r'}$ is 
\begin{align*}
\frac{\mathcal{R}}{\mathcal{R}_i} \left( \frac{J_{R,n}}{\mathcal{R}}+O(1)\right)&= \frac{1}{\mathcal{R}_i}J_{R,n} + O\left( \frac{\mathcal{R}}{\mathcal{R}_i}\right)\\
&= \frac{1}{\mathcal{R}_i} J_{R,n} + O(q(n)^{r'}),
\end{align*}
where the last equality comes from the fact that each base is at most $q(n)$.

Therefore, we have
\[
N_n^{Q*}(B,x) = \sum_{k\le r'\le r} \sum_{|R|=r'} \sum_{\substack{1 \le i \le r'-k+1 \\ B <   [R_i, R_{i+1},\dots,R_{i+k-1}]}} \left( \frac{1}{\mathcal{R}_i} J_{R,n} + O(q(n)^{r'}) \right)
\]
where the second sum runs over all $R$ such that $R=R_{j,r'}$ for some $R_{j,r'}$ that appears completely before the $n$th place. Let us treat the big-O term separately. We have
\begin{align*}
 \sum_{k\le r'\le r} \sum_{|R|=r'} \sum_{\substack{1 \le i \le r'-k+1 \\ B <   [R_i, R_{i+1},\dots,R_{i+k-1}]}} q(n)^{r'}& \le  \sum_{k\le r'\le r} \sum_{|R|=r'}  r' q(n)^{r'}\\
&\le   \sum_{k\le r'\le r} r q(n)^{2r'}\\
&\le r^2 q(n)^{2r}.
\end{align*}
Therefore,
\[
N_n^{Q*}(B,x) = \left( \sum_{k\le r'\le r} \sum_{|R|=r'} \sum_{\substack{1 \le i \le r'-k+1 \\ B <   [R_i, R_{i+1},\dots,R_{i+k-1}]}}  \frac{1}{\mathcal{R}_i} J_{R,n} \right) +  O(r^2 q(n)^{2r}).
\]

If we examine this triple sum carefully and recall the definition of $J_{R,n}$, we see that this is 
\[
\sum \frac{I_i(B)}{q_i q_{i+1}\dots q_{i+k-1}}
\]
where the sum runs over all $i$ such that $[q_i,q_{i+1},\dots, q_{i+k-1}]$ is a sub-block of some $R_{j,r'}$ that appears completely before the $n$th place. This sum is $Q_n^*(B)$ up to $O(r)$ (to account for the possibility that $n$ occurs in the middle of some sub-block $R_{j,r'}$).

Therefore,
\[
N_n^{Q*}(B,x) = Q_n^*(B)+O(r) +  O(r^2 q(n)^{2r}) =Q_n^*(B) +  O(r^2 q(n)^{2r})  .
\]

For sufficiently large $n$ (which in turn will give large $r$), we have
\[
r^2 q(n)^{2r} \ll \frac{(q(n)^2+1)^r}{r} \ll \frac{n}{r} = o(Q_n(B)).
\]
Since we already know that $Q_n(B)\sim Q_n^*(B)$, we therefore have 
\[
N_n^{Q*}(B,x) = Q_n^*(B) +  o(Q_n^*(B)),
\]
which completes the proof of $Q$-normality.

\subsection{Proof of $Q$-distribution normality}

Our goal now is to show that $(T_{Q,m}(x))_{m=0}^\infty$ is uniformly distributed. (The switch from labelling indicies by $n$ to labelling indices by $m$ is intentional.) We have that
\[
T_{Q,m}(x) = \sum_{i=1}^\infty \frac{E_{m+i}}{q_{m+1}q_{m+2}\dots q_{m+i}}
\]
where $(E_i)_{i=1}^\infty$ are the digits of $x_Q$.

Let $x_m$ be defined by
\[
x_m := \sum_{i=1}^{\ell(r(m))} \frac{E_{m+i}}{q_{m+1}q_{m+2}\dots q_{m+i}}.
\]
where $\ell(y)= \lfloor \sqrt{y}\rfloor$. 
Since 
\[
\left|x_m - T_{Q,m}(x)\right| \le \frac{1}{q_{m+1}q_{m+2}\dots q_{m+\ell(r(m))}} \le 2^{-\ell(r(m))},
\] which tends to $0$ with $m$, 
we have that $(T_{Q,m}(x))_{n=0}^\infty$ is uniformly distributed if and only if $(x_m)_{m=0}^\infty$ is.

Let $\mathcal{I}$ be some interval in $[0,1)$. To complete the proof of $Q$-distribution normality, we must show that
\[
\#\{0 \le m \le n : x_m \in \mathcal{I}\} = n \mathcal{I} (1+o(1)).
\]

As we did earlier, consider a block $R=[R_1,R_2,\dots, R_{r'}]$ with $r'\le r$ and an integer $i$ such that $1 \le i \le r'-\ell(r')+1$. Let $\mathcal{R} = R_1 R_2\dots R_{r'}$ and $\mathcal{R}_i = R_i R_{i+1}\dots R_{i+\ell(r')-1}$. Let $J_{R,n}$ denote the number of $j$ such that $R=R_{j,r'}$ with $R_{j,r'}$ occurring completely before the $ n$th place. 

Suppose that $m\le n$ and the $q_m$ appears in $Q$ at precisely the $i$th place of a sub-block $R_{j,r'}$. Then $x_m$ is a rational number with denominator $\mathcal{R}_i$. The number of distinct blocks of digits $B< [R_i, R_{i+1},\dots, R_{i+\ell(n)-1}]$ such that $x_m\in \mathcal{I}$ is $\mathcal{R}_i|\mathcal{I}|+O(1)$. And thus, by applying the same technique as in Section \ref{sec:prooflastasymp}, we see that the number of times $x_m \in \mathcal{I}$ with $m$ satisfying the above conditions is 
\[
\left(\frac{J_{R,n}}{\mathcal{R}_i}+O(q(n)^{r'})\right) \left( \mathcal{R}_i|\mathcal{I}|+O(1)\right) = J_{R,n}|\mathcal{I}| + O\left( \frac{J_{R,n}}{\mathcal{R}_i} \right) + O(q(n)^{r'}).
\]

For any fixed small $\epsilon>0$, let $r_\epsilon$ be an integer large enough so that $|\mathcal{I}|2^{-r_\epsilon} < \epsilon$ and $\ell(r')/r' < \epsilon$ for any $r' > r_{\epsilon}$. Then we have
\begin{align*}
\#\{0 \le m \le n : x_m \in \mathcal{I}\}&\ge \sum_{r_\epsilon \le r'\le r} \sum_{|R|=r'} \sum_{1\le i \le r'-\ell(r')+1} \left(J_{R,n}|\mathcal{I}| + O\left( \frac{J_{R,n}}{\mathcal{R}_i} \right) + O(q(n)^{r'})\right)\\
&=\sum_{r_\epsilon \le r'\le r} \sum_{|R|=r'} \sum_{1\le i \le r'-\ell(r')+1} \left(J_{R,n}|\mathcal{I}|(1+O(\epsilon)) + O(q(n)^{r'})\right).
\end{align*}
The inequality is due to not counting those $i$ for which $i\ge r'-\ell(r')+1$.

Again, by the work of Section \ref{sec:prooflastasymp}, we know that the sum over $O(q(n)^{r'})$ will be at most $O(n/r)=o(n)$. Thus,
\begin{align*}
\#\{0 \le m \le n : x_m \in \mathcal{I}\} &\ge o(n)+\sum_{r_\epsilon \le r'\le r} \sum_{|R|=r'} \sum_{1\le i \le r'-\ell(r')+1} \left(J_{R,n}|\mathcal{I}|(1+O(\epsilon))\right) \\
&= o(n) + O(N_{r_\epsilon}) + \sum_{1\le r'\le r} \sum_{|R|=r'} \sum_{1\le i \le r'-\ell(r')+1} \left(J_{R,n}|\mathcal{I}|(1+O(\epsilon))\right)\\
&= o(n) + \sum_{1\le r'\le r} \sum_{|R|=r'} (r'-\ell(r')) \left(J_{R,n}|\mathcal{I}|(1+O(\epsilon))\right).
\end{align*}
By the definition of $r_\epsilon$, we have $r'-\ell(r')=r'(1+O(\epsilon))$, so that
\begin{align*}
\#\{0 \le m \le n : x_m \in \mathcal{I}\} &\ge  o(n)  + \sum_{1\le r'\le r} \sum_{|R|=r'} \left(r' J_{R,n}|\mathcal{I}|(1+O(\epsilon))\right)\\
&= o(n) + n|\mathcal{I}|(1+O(\epsilon)) + O(r)\\
&= n|\mathcal{I}|(1+O(\epsilon)+o(1))
\end{align*}
where again the $O(r)$ term comes from the fact that $n$ could be in the middle of some term $R_{j,r}$. Since $\epsilon$ was arbitrary, the desired result follows.

\section{Computability}

\begin{proof}[Proof of Theorem \ref{thm:comp}]
Since $(q(n))$ is a computable sequence of integers, the sequence $((q(n)^r+1)^r)_{n=1}^\infty$ is also a computable sequence of integers for a fixed integer $r$. Create a Turing machine $M : \N \to \N$ such that $M(r) = n_r$ as follows. Consider the Turing machine $L :  \N \times \N \to \br{0,1}$ such that $L(x,y) = 1$ if when $x \leq y$ and $0$ otherwise. For input $r$ have $M$ at step $n$ output $n$ if $L((q(n)^2+1)^r, n) = 1$ and halt, otherwise increment $n$ by $1$. This process halts because $n_r$ exists, and $M(r) = n_r$. Thus the sequence $(n_r)_{r=1}^\infty$ is a computable sequence of integers.

To see the sequence $(N_r)_{r=1}^\infty$ is a computable sequence of integers, consider the Turing machine $I : \N \times \N \to \N$ such that $I(n,r)$ is the greatest integer less than $n_r$ such that $I(n,r)-n \equiv 0 \mod r$. Construct $I$ as follows. Consider the Turing machine $Rem_r : \N \to \br{0,1}$ such that $Rem_r(n) \equiv n \mod r$ with $0 \leq Rem_r(n) < r$. Have $I$ first compute $n-Rem_r(n)$, and at step $k$ check if $L(n-Rem_r(n)+kr, n_r) = 0$. If $L(n-Rem_r(n)+kr, n_r) = 0$, have $I$ output $n-Rem_r(n)+(k-1)r$, otherwise increment $k$ by 1. Finally construct the Turing machine $N: \N \to \N$ that on input $r$ computes $I(I(\cdots I(I(0,1),2)\cdots r-1), r)$. Then $N(r) = N_r$, so $(N_r)_{r = 1}^\infty$ is a computable sequence of integers.

Now construct the Turing machine $E : \N \to \N$ with $E(n) = E_n$ as follows. First make the machine $r : \N \to \N$ such that $r(n)$ is the integer $r$ such that $N_r \leq n < N_{r+1}$. Such a machine exists because the sequence of integers $(N_r)$ is computable and the order relation on the integers is a computable relation. Construct a Turing machine $J_r : \N^r \times \N \to \N$ such that $J([R_1, R_2, \cdots, R_r], n)$ is the number of times the block $[R_1, R_2, \cdots , R_r]$ occurs a position $t$ in $Q$ with $t \equiv N_r \mod r$, $t \geq N_r$, and $t \leq n$. Create a Turing machine $B_r : \N^r \times \N \to \N^r$ such that $B_r([R_1, R_2, \cdots, R_r], i)$ is the $i$th block $B$ in the lexicographic ordering on $\N^r$ with $B < [R_1, R_2, \cdots, R_r]$. Finally let $R_r: \N \to \N^r$ be the Turing machine with $R_r(n) = [q_{N_r + jr+1}, q_{N_r+jr+2}, \cdots, q_{N_r+(j+1)r}]$ such that $N_r+jr+1 \leq n \leq N_r+(j+1)r$. Then $E(n)$ is the $n - N_{r(n)} \mod r$th element of $B_{r(n)}(R_{r(n)}(n),J_{r(n)}(R_{r(n)}(n),n))$. Thus the sequence $(E_n)$ is a computable sequence. Since $Q$ is also a computable sequence of integers, the real number $x_Q = \sum_{i=1}^\infty \frac{E_i}{q_1 \cdots q_i}$ that was constructed in Section \ref{sec:construction} is a computable real number.
\end{proof}

Using this theorem, we can now give computable examples of numbers that are normal of one type but not another, as in \cite{AireyManceHDDifference} and  \cite{ppq1}. We will need the following definition and theorem from \cite{ppq1}.

Let $(P,Q)$ be a pair of basic sequences and suppose that $x = E_0. E_1 E_2 \cdots $ w.r.t. $P$. We define
$$
\psi_{P,Q}(x) : = \sum_{n=1}^\infty \frac{\min \br{E_n, q_n-1}}{q_1 \cdots q_n}.
$$

\begin{thm}\label{thm:psipq}
Suppose that  $Q_1 = (q_{1,n}), Q_2 = (q_{2,n}), \cdots, Q_j = (q_{j,n})$ are basic sequences and infinite in limit. Set
$$
\Psi_j(x) = (\psi_{Q_{j-1}, Q_j} \circ \psi_{Q_{j-2},Q_{j-1}} \circ \cdots \circ \psi_{Q_1, Q_2})(x).
$$
If $x = E_0.E_1E_2 \cdots$ w.r.t. $Q_1$ satisfies $E_n < \min_{2 \leq r \leq j}(q_{r,n}-1)$ for infinitely many $n$, then for every block $B$
$$
N_{n}^{Q_j}(B,\Psi_j(x)) = N_{n}^{Q_1}(B,x) + O(1).
$$
\end{thm}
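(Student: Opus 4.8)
The plan is to unwind the definition of $\psi_{P,Q}$ and show that applying it changes each digit only by clamping it to the range $\{0,1,\dots,q_n-1\}$, and then to argue that such clamping can only affect finitely many digit positions that matter for counting any fixed block $B$.

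First I would analyze a single application $y = \psi_{P,Q}(x)$ where $x = E_0.E_1E_2\cdots$ w.r.t. $P$. The key observation is that $\psi_{P,Q}(x) = \sum_{n\ge 1} \min\{E_n,q_n-1\}/(q_1\cdots q_n)$ is \emph{by construction} written as a $Q$-Cantor series with digits $E_n' = \min\{E_n,q_n-1\}$, since each $E_n'$ lies in $\{0,1,\dots,q_n-1\}$; one must check the ``$E_n'\neq q_n-1$ infinitely often'' condition, which is where the hypothesis that $x$ has infinitely many digits $E_n$ strictly below the relevant minima comes in (so that $E_n' = E_n < q_n - 1$ infinitely often, making this the genuine $Q$-Cantor expansion rather than one requiring carrying). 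So $\psi_{P,Q}$ takes the $P$-expansion digit string $(E_n)$ to the $Q$-expansion digit string $(\min\{E_n,q_n-1\})$. Next I would note that $E_n' \ne E_n$ only when $E_n \ge q_n - 1$, i.e. only when $E_n \ge q_n$ (or $E_n = q_n-1$, but then $E_n'=E_n$ too), so in fact $E_n' \neq E_n$ exactly when $E_n \ge q_n$. Crucially, for counting occurrences of a fixed block $B = [b_1,\dots,b_k]$ with all $b_j < q_n$ eventually (which is automatic once we only care about positions where $B$ can legally occur), an occurrence of $B$ in $(E_n')$ at position $i$ corresponds to an occurrence of $B$ in $(E_n)$ at position $i$ \emph{whenever none of $E_i,\dots,E_{i+k-1}$ was clamped}, and clamping at position $n$ only destroys/creates occurrences of $B$ overlapping position $n$.

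Then I would iterate. By induction on $j$, $\Psi_j(x)$ is the $Q_j$-Cantor series whose $n$-th digit is $\min\{E_n, q_{2,n}-1, q_{3,n}-1, \dots, q_{j,n}-1\}$ — at each stage the previous (already clamped) digit gets clamped further by the next modulus, and nested minima collapse. The hypothesis $E_n < \min_{2\le r\le j}(q_{r,n}-1)$ for infinitely many $n$ guarantees that at each stage the resulting digit string is genuinely a $Q_r$-Cantor expansion (the non-terminating condition holds), so the compositions are well-defined on the nose as digit-string operations, and moreover it tells us the final digit equals $E_n$ itself at infinitely many $n$. The set $S = \{n : \min_r(q_{r,n}-1) \le E_n\}$ is where the final digit differs from $E_n$. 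For a fixed block $B$, the difference $N_n^{Q_j}(B,\Psi_j(x)) - N_n^{Q_1}(B,x)$ is bounded in absolute value by the number of block-occurrences (counting positions $i\le n$) that overlap some index in $S$; each element of $S$ is overlapped by at most $k$ starting positions, so $|N_n^{Q_j}(B,\Psi_j(x)) - N_n^{Q_1}(B,x)| \le k\cdot\#(S)$.

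The one genuine subtlety — and the step I expect to require the most care — is showing $\#(S) = O(1)$, i.e.\ that $S$ is \emph{finite}. This does not follow from the stated hypothesis alone in the form ``$E_n < \min_r(q_{r,n}-1)$ infinitely often'': that only gives infinitely many $n \notin S$, not finitely many $n \in S$. The resolution is that the $Q_r$ are \emph{infinite in limit}, meaning $q_{r,n}\to\infty$, hence $\min_{2\le r\le j}(q_{r,n}-1)\to\infty$; while for a \emph{fixed} block $B=[b_1,\dots,b_k]$ of interest, if $B$ ever occurs in the $Q_1$-expansion of $x$ at a position $i$ then $b_t < q_{1,i+t-1}$, but this does not bound $E_i$. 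The honest statement is that $S$ need not be finite, so the intended claim must be read relative to the relevant positions: I would instead show that among positions $i\le n$ at which $B$ legally occurs in \emph{both} expansions, the count changes only when $E_i,\dots,E_{i+k-1}$ includes a clamped digit — but since $B$'s entries are fixed and $\min_r(q_{r,\cdot}-1)\to\infty$, once $n$ is large enough that $\min_r(q_{r,n}-1) > \max_t b_t$, a position where $B$ occurs in $(E_n')$ forces $E'_{i+t-1}=b_t$, and clamping to a value $\le \max_t b_t$ below all the moduli is impossible for large indices; symmetrically a $B$-occurrence in $(E_n)$ with all $E_{i+t-1}=b_t < \min_r(q_{r,i+t-1})-1$ is preserved. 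Hence the symmetric difference of $B$-occurrence sets is contained in positions overlapping $\{i : E_i \ge \min_r(q_{r,i}-1)\}\cap\{i\le C_B\}$ for a constant $C_B$ depending only on $B$ and the $Q_r$'s, which is finite, giving the $O(1)$ bound with an implied constant depending on $B$ and $j$ (consistent with the statement, where the $O(1)$ is allowed to depend on everything but $n$).
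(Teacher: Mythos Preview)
The paper does not prove this theorem; it is imported verbatim from \cite{ppq1} and used as a black box. There is therefore no in-paper proof to compare against.

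That said, your argument is correct and captures exactly the mechanism behind the result. The key steps are all present: (i) a single $\psi_{P,Q}$ replaces the $P$-digit string $(E_n)$ by the $Q$-digit string $(\min\{E_n,q_n-1\})$, and the hypothesis that $E_n$ is small infinitely often is precisely what guarantees this is the genuine $Q$-Cantor expansion rather than one requiring carries; (ii) iterating gives the nested minimum $E'_n=\min\{E_n,q_{2,n}-1,\dots,q_{j,n}-1\}$ as the $Q_j$-digits of $\Psi_j(x)$; (iii) for a fixed block $B$ with maximum entry $M$, once $m$ is large enough that $\min_{2\le r\le j} q_{r,m}-1>M$ (which happens eventually because each $Q_r$ is infinite in limit), clamping can neither create nor destroy an occurrence of $B$ at any position $\ge m$, so the two counts differ by at most a constant depending on $B$ and the $Q_r$.

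You were right to flag, and then correctly resolve, the subtlety that ``$E_n<\min_r(q_{r,n}-1)$ infinitely often'' by itself does \emph{not} make the set of clamped indices finite; it is the infinite-in-limit hypothesis that does the real work for the $O(1)$ bound, while the infinitely-many-small-digits hypothesis only serves to keep the expansions non-terminating. Your last paragraph slightly overcomplicates the bookkeeping: it suffices to pick $N_B$ with $\min_r q_{r,m}-1>M$ for all $m>N_B$, observe that for $i>N_B$ the block $B$ occurs at position $i$ in $(E_n)$ if and only if it occurs there in $(E'_n)$, and conclude that the two counts differ by at most $N_B$.
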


We now state the following three theorems.

\begin{thm}
If $Q$ is slowly growing, infinite in limit and the sequences $(q_n)$ and $(q(n))$ are computable sequences of integers, then there is a computable real number in $\NQ \setminus \DNQ$
\end{thm}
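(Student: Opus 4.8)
The plan is to build a computable element of $\NQ\setminus\DNQ$ by perturbing the number $x_Q$ of Section~\ref{sec:construction} using the map $\psi_{P,Q}$ together with Theorem~\ref{thm:psipq}. The idea is to choose a second basic sequence $P=(p_n)$ with $p_n\le q_n$ for all $n$ and such that the pair $(P,Q)$ is ``compressive'' in a way that destroys distribution normality while preserving $\NQ$. Concretely, I would first construct, via Theorem~\ref{thm:main} and Theorem~\ref{thm:comp}, a computable real number $y$ that is $P$-normal and $P$-distribution normal, where $P$ is chosen to be slowly growing, infinite in limit, satisfy the second hypothesis of Theorem~\ref{thm:main}, and have $p_n<q_n-1$ infinitely often (e.g., insert infinitely many small bases, or simply take $p_n=2$ along a sparse set of indices, keeping $P$ slowly growing). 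Then I would set $x:=\psi_{P,Q}(y)$.

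The key steps, in order, are: (1) specify $P$ explicitly from $Q$ so that $P$ is computable whenever $(q_n),(q(n))$ are computable, $P$ inherits the two hypotheses of Theorem~\ref{thm:main}, and the digits of the $P$-expansion of the number $y$ produced satisfy $E_n<q_n-1$ for infinitely many $n$ (this last condition is the hypothesis of Theorem~\ref{thm:psipq} with $j=2$, $Q_1=P$, $Q_2=Q$); (2) invoke Theorem~\ref{thm:comp} to get that $y$ is computable, and observe that $\psi_{P,Q}$ is a computable operation on the digit sequence (it is a coordinatewise $\min$ followed by a base change, exactly the kind of primitive recursive manipulation carried out in the proof of Theorem~\ref{thm:comp}), so $x=\psi_{P,Q}(y)$ is a computable real number; (3) apply Theorem~\ref{thm:psipq} to conclude $N_n^Q(B,x)=N_n^P(B,y)+O(1)$ for every block $B$, and combine this with the fact that $Q_n(B)\to\infty$ forces $P_n(B)\to\infty$ (since $p_i\le q_i$ makes each summand of $P_n(B)$ at least as large as that of $Q_n(B)$, after checking the admissibility indicator behaves correctly) to deduce $N_n^Q(B,x)\sim Q_n(B)$, i.e. $x\in\NQ$; (4) show $x\notin\DNQ$ by exhibiting an interval $\mathcal I$ whose frequency under $(T_{Q,m}(x))$ is wrong --- here one uses that the $Q$-digits of $x=\psi_{P,Q}(y)$ are each $\le p_n-1$, so whenever $p_n$ is much smaller than $q_n$ (which happens infinitely often), the tail $T_{Q,m}(x)$ is confined to a short subinterval near $0$, producing a limiting distribution that is not uniform.

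The main obstacle I anticipate is Step~(3)–(4) bookkeeping: verifying that $Q_n(B)\to\infty \Rightarrow P_n(B)\to\infty$ requires care because the indicator $I_i(B)$ is defined relative to the bases, and a digit block admissible for $Q$ need not be admissible for the smaller bases of $P$; one must choose $P$ so that $p_n$ is only shrunk along a set of indices sparse enough that it does not affect whether $P_n(B)\to\infty$, yet dense enough to spoil distribution normality. A clean way to finesse this is to take $P$ to agree with $Q$ except on a very sparse set $S$ of indices (say $|S\cap[1,n]|=O(\log n)$) where $p_n=2$; then for any fixed block $B$, the contribution of indices in $S$ to $P_n(B)$ is $O(\log n)=o(Q_n(B))$ by the second hypothesis of Theorem~\ref{thm:main}, so $P_n(B)\sim Q_n(B)\to\infty$, while the sparse family of indices with $p_n=2\ll q_n$ still forces $T_{Q,m}(x)$ to cluster near $0$ along a positive-density set of $m$, contradicting uniform distribution. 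I would also need to check that this $P$ still satisfies the second hypothesis of Theorem~\ref{thm:main} (it does, since shrinking finitely-contributing bases only decreases $P_n(B)$ by $o(n\log q(n)/\log n)$) and that the ``$E_n<q_n-1$ infinitely often'' hypothesis of Theorem~\ref{thm:psipq} holds --- which is automatic since $y$ is $P$-normal, hence its $P$-digits are not eventually maximal.
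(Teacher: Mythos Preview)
Your proposal has a genuine gap, and the ``finesse'' in the last paragraph does not close it. With $P$ equal to $Q$ off a set $S$ satisfying $|S\cap[1,n]|=O(\log n)$ and $p_n=2$ on $S$, the indices $m$ at which the digit cap $E_{m+1}(x)\le p_{m+1}-1=1$ actually constrains $T_{Q,m}(x)$ are exactly those with $m+1\in S$, a set of density zero; your claim that $T_{Q,m}(x)$ clusters near $0$ along ``a positive-density set of $m$'' is therefore false for this $P$, and you get no contradiction to uniform distribution. There is a real tension between your Steps~(3) and~(4): to have $P_n(B)\sim Q_n(B)$ you want $P$ to differ from $Q$ only sparsely, but to kill $Q$-distribution normality via the bound $E_n(x)\le p_n-1$ you need $p_n/q_n$ to be small on a set of positive density, and then the summands of $P_n(B)$ and $Q_n(B)$ differ by a bounded-below factor on that set. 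A secondary problem is that your $P$ has $p_n=2$ infinitely often and so is not infinite in limit; Theorem~\ref{thm:psipq} does not apply to it as stated.

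The paper sidesteps this tension by a round trip through $P$ rather than a one-way push from $P$ to $Q$. It takes $p_n=\max\{\lfloor\log q_n\rfloor,2\}$ (so $P$ is infinite in limit and $p_n/q_n\to 0$ at \emph{every} index), starts from the computable $x_Q\in\NQ$ produced by Theorem~\ref{thm:comp} for $Q$ itself, and sets $y=(\psi_{P,Q}\circ\psi_{Q,P})(x_Q)$. Theorem~\ref{thm:psipq} with $Q_1=Q_3=Q$, $Q_2=P$ then gives $N_n^Q(B,y)=N_n^Q(B,x_Q)+O(1)$ directly --- no comparison of $P_n(B)$ with $Q_n(B)$ is ever needed --- while every $Q$-digit of $y$ is at most $p_n-1=o(q_n)$, which forces $T_{Q,n}(y)\to 0$ and hence $y\notin\DNQ$.
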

\begin{proof}
Let $p_n = \max\br{\lfloor \log q_n \rfloor, 2}$ and set $P = (p_n)$. By \reft{comp} there is a computable real number $x_Q \in \NQ$. Put $y = (\psi_{P,Q} \circ \psi_{Q,P})(x)$. Then $y$ is $Q$-normal by \reft{psipq} but $T_{Q,n}(y) \to 0$ so $y$ is not $Q$-distribution normal. Furthermore, $E_{Q,n}(y) = \max \br{ E_{Q,n}(x), \lfloor \log q_n \rfloor, 2}$ which is a computable sequence of integers. Therefore $y$ is a computable real number.
\end{proof}

\begin{thm}
If $Q$ is slowly growing, infinite in limit, and the sequences $(q_n)$ and $(q(n))$ are computable sequences of integers, then there is a computable real number in $\RNQ \setminus \NQ$.
\end{thm}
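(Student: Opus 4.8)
The plan is to follow the template of the preceding theorem and to realize the desired number as $y:=\psi_{P,Q}(x_P)$ for a carefully chosen auxiliary basic sequence $P=(p_n)$. The mechanism: if $2\le p_n\le q_n$ for every $n$, then $\min\{E_n,q_n-1\}=E_n$ for every $P$-digit $E_n$, so $\psi_{P,Q}$ merely reinterprets the digit string of $x_P$ with respect to $Q$; \reft{psipq} applied with $j=2$, $Q_1=P$, $Q_2=Q$ (its hypothesis holds since $E_n(x_P)\le p_n-1<q_n-1$ for all large $n$) then yields $N_n^Q(B,y)=N_n^P(B,x_P)+O(1)$ for every block $B$. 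If moreover $P$ is infinite in limit, then $I_i(B)=1$ for all but finitely many $i$, so every block $B$ of length $k$ satisfies $P_n(B)\sim P_n^{(k)}:=\sum_{i=1}^{n}(p_ip_{i+1}\cdots p_{i+k-1})^{-1}$, a quantity depending only on $k$. Combining these with the $P$-normality of $x_P$ gives $N_n^Q(B,y)\sim P_n^{(k)}$ uniformly over blocks $B$ of length $k$. Since $Q$ is slowly growing and infinite in limit, $Q_n(B)\sim Q_n^{(k)}:=\sum_{i=1}^n(q_iq_{i+1}\cdots q_{i+k-1})^{-1}\gg_k n/q(n)^k\to\infty$ for every block, so this uniformity is exactly the assertion $y\in\RNQ$. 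What remains is to guarantee $y\notin\NQ$, for which it is enough that $\liminf_{n\to\infty}P_n^{(1)}/Q_n^{(1)}>1$.

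Hence the whole problem reduces to producing a basic sequence $P$ with: (i)~$2\le p_n\le q_n$ for all $n$; (ii)~$p_n\to\infty$; (iii)~$P$ satisfies both hypotheses of \reft{main} and $(p_n),(p(n))$ are computable, so \reft{comp} applies to $P$; and (iv)~$\liminf_{n\to\infty}P_n^{(1)}/Q_n^{(1)}>1$. I would take
\[
p_n=\max\{\,2,\ \min\{\ceil{q_n/2},\ g(n)\}\,\},
\]
where $g$ is any nondecreasing computable sequence of integers with $g(n)\ge 2$, $g(n)\to\infty$, and $g(n)=(\log n)^{o(1)}$ (for instance $g(n)=\floor{\log_2\floor{\log_2(n+16)}}$). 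Then (i) is immediate, and (ii) holds because $q_n\to\infty$ (as $Q$ is infinite in limit) forces $\min\{\ceil{q_n/2},g(n)\}\to\infty$. For (iii): $p(n)\le\max\{2,g(n)\}=(\log n)^{o(1)}=n^{o(1)}$, so $P$ is slowly growing; and since $P$ is infinite in limit, every length-$k$ block has $P_n(B)\sim P_n^{(k)}\gg_k n/p(n)^k$, whence
\[
\frac{P_n(B)}{n\log p(n)/\log n}\ \gg_k\ \frac{\log n}{p(n)^k\log p(n)}\ \longrightarrow\ \infty
\]
because $p(n)^k\log p(n)=(\log n)^{o(1)}$; computability of $(p_n)$ and $(p(n))$ follows from that of $(q_n)$, $(q(n))$, and $g$. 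For (iv): whenever $q_n$ is large (hence for all large $n$) one has $p_n^{-1}-q_n^{-1}\ge\tfrac12 q_n^{-1}$ — if $q_n\le 2g(n)$ then $p_n=\ceil{q_n/2}\le\tfrac23 q_n$, while if $q_n>2g(n)$ then $p_n=g(n)$, so $p_n^{-1}-q_n^{-1}>\tfrac{1}{2g(n)}>\tfrac{1}{2q_n}$ — and therefore $P_n^{(1)}\ge\tfrac32 Q_n^{(1)}-O(1)$; since $Q_n^{(1)}=\sum_{i\le n}q_i^{-1}\to\infty$ this gives $\liminf_n P_n^{(1)}/Q_n^{(1)}\ge\tfrac32$.

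To assemble the proof: \reft{comp} applied to $P$ produces (as in its proof) a computable sequence of integers $(E_n)$ that are the $P$-Cantor digits of a $P$-normal number $x_P$; by (i) this same sequence is the $Q$-Cantor digit string of $y=\psi_{P,Q}(x_P)$, so $y=\sum_{n\ge1}E_n/(q_1\cdots q_n)$ is a computable real number. From the first paragraph, $N_n^Q(B,y)=P_n^{(|B|)}(1+o(1))$ for every block $B$ (using $P_n(B)\sim P_n^{(|B|)}\to\infty$ and the $P$-normality of $x_P$). Hence for blocks $B_1,B_2$ of equal length $N_n^Q(B_1,y)/N_n^Q(B_2,y)\to 1$, so $y\in\RNQ$; while for $B=[0]$ we get $N_n^Q([0],y)/Q_n([0])=(1+o(1))\,P_n^{(1)}/Q_n^{(1)}$, whose lower limit is at least $\tfrac32\ne 1$ although $Q_n([0])\to\infty$, so $y\notin\NQ$.

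The genuinely delicate step is reconciling (iv) with (iii). We need $P$ to be appreciably thinner than $Q$ so that $\liminf P_n^{(1)}/Q_n^{(1)}>1$; but the second hypothesis of \reft{main}, imposed on $P$, forces $p(n)=(\log n)^{o(1)}$, which may lie far below $q(n)$. Capping $p_n$ at the extremely slowly growing $g(n)$ resolves this: at indices where $q_n$ is small the factor-of-two thinning does the work, and at indices where $q_n$ is large the cap $g(n)$ — then automatically below $q_n/2$ — does it. This dichotomy is precisely where the hypothesis that $Q$ is infinite in limit is used: it is what permits $p_n\to\infty$ while keeping $p_n\le q_n$.
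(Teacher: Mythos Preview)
Your proof is correct and follows the same overall strategy as the paper: realize the desired number as $y=\psi_{P,Q}(x_P)$ for a thinner basic sequence $P$, apply \reft{comp} to $P$ to obtain a computable $x_P\in\mathscr{N}(P)$, and then use \reft{psipq} to transfer block counts from $P$ to $Q$. The paper simply takes $p_n=\max\{\lfloor q_n/2\rfloor,2\}$ and defers the verification that $y\in\RNQ\setminus\NQ$ to \cite{ppq1}, whereas you carry out that verification explicitly. Your additional cap $g(n)=(\log n)^{o(1)}$ is a genuine refinement: under the hypotheses as literally stated (which do not include the second condition of \reft{main} for $Q$), the plain halved sequence need not satisfy that condition---for instance if $q_n\asymp\sqrt{\log n}$, then for length-$2$ blocks $P_n(B)\asymp n/\log n$ while $n\log p(n)/\log n\asymp n\log\log n/\log n$, so the ratio tends to $0$---and \reft{comp} could not then be applied to $P$. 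Your capped $P$ always satisfies both conditions of \reft{main}, making the argument self-contained. If instead one reads the statement as tacitly assuming the full hypotheses of \reft{main} on $Q$ (as the surrounding text suggests), the simpler halved sequence already inherits the second condition, since $p_n\le q_n$ gives $P_n(B)\ge Q_n(B)+O(1)$ and $\log p(n)\le\log q(n)$; in that reading your cap is unnecessary but harmless.
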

\begin{proof}
Let $p_n = \max \br{ \floor{q_n/2}, 2}$ and set $P= (p_n)$. The basic sequence $P$ clearly has the same properties at $Q$. Let $x_Q$ be a computable real number in $\mathscr{N}(P)$ and set $y = \psi_{P,Q}(x)$. The real number $y$ is clearly computable, and by the calculations in \cite{ppq1} is in $\RNQ \setminus \NQ$.
\end{proof}

To prove the next result we will need the following definition and lemma. For a sequence of real numbers $X = (x_n)$ with $x_n \in [0,1)$ and an interval $I \subseteq [0,1]$,  define $A_n(I,X) = \# \{i\leq n: x_i \in I \}$.
We quote the following from \cite{KuN}.

\begin{defn}
Let $X = \pr{x_1, \cdots , x_N}$ be a finite sequence of real numbers. The number $$D_N = D_N(X) = \sup_{0 \leq \alpha \leq \beta \leq 1} \left | \frac{A_N([\alpha, \beta), X)}{N} - (\beta - \alpha) \right |$$ is called the {\it discrepancy} of the sequence $\omega$.
\end{defn}

It is well known that a sequence $X$ is uniformly distributed mod $1$ if and only if $D_N(X) \to 0$.

\begin{lem}\label{lem:DiscKuN}
Let $x_1, x_2, \cdots, x_N$ and $y_1, y_2, \cdots, y_N$ be two finite sequences in $[0,1)$.  Suppose $\epsilon_1, \epsilon_2, \cdots, \epsilon_N$ are non-negative numbers such that $|x_n-y_n| \leq \epsilon_n$ for $1 \leq n \leq N$.  Then, for any $\epsilon \geq 0$, we have
$$
|D_N(x_1,\cdots,x_N)-D_N(y_1,\cdots,y_N)| \leq 2\epsilon+\frac {\overline{N}(\epsilon)}{N},
$$
where $\overline{N}(\epsilon)$ denotes the number of $n$, $1 \leq n \leq N$, such that $\epsilon_n>\epsilon$.
\end{lem}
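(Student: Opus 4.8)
The plan is to fix an arbitrary half-open subinterval $[\alpha,\beta)\subseteq[0,1)$, compare the counting function $A_N([\alpha,\beta),Y)$ with counting functions of $X$ over slightly enlarged or shrunk intervals, and absorb into $\overline{N}(\epsilon)$ exactly those indices $n$ for which the approximation $|x_n-y_n|\le\epsilon_n$ is too crude, i.e.\ for which $\epsilon_n>\epsilon$. Since the hypothesis $|x_n-y_n|\le\epsilon_n$ and the quantity $\overline{N}(\epsilon)$ are symmetric in $X$ and $Y$, it suffices to bound $D_N(Y)-D_N(X)$ from above; the reverse inequality then follows by swapping the roles of the two sequences, and the two together give the claimed two-sided estimate.

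For the upper bound, write $G=\{1\le n\le N:\epsilon_n\le\epsilon\}$, so that $\{1,\dots,N\}\setminus G$ has exactly $\overline{N}(\epsilon)$ elements. For $n\in G$ the inequality $|x_n-y_n|\le\epsilon$ shows that $y_n\in[\alpha,\beta)$ forces $x_n\in[\alpha-\epsilon,\beta+\epsilon)\cap[0,1)$, which is contained in the admissible interval $[\max(0,\alpha-\epsilon),\min(1,\beta+\epsilon))$. Hence $A_N([\alpha,\beta),Y)\le A_N\bigl([\max(0,\alpha-\epsilon),\min(1,\beta+\epsilon)),X\bigr)+\overline{N}(\epsilon)$. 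Applying the definition of $D_N(X)$ to that interval and using $\min(1,\beta+\epsilon)-\max(0,\alpha-\epsilon)\le(\beta-\alpha)+2\epsilon$ gives $A_N([\alpha,\beta),Y)/N-(\beta-\alpha)\le 2\epsilon+D_N(X)+\overline{N}(\epsilon)/N$.

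For the matching lower bound I would split on whether $\beta-\alpha\le 2\epsilon$. In that case $A_N([\alpha,\beta),Y)/N-(\beta-\alpha)\ge-(\beta-\alpha)\ge-2\epsilon$ holds trivially. Otherwise $[\alpha+\epsilon,\beta-\epsilon)$ is a nonempty subinterval of $[0,1)$, and for $n\in G$ the containment $x_n\in[\alpha+\epsilon,\beta-\epsilon)$ forces $y_n\in[\alpha,\beta)$, so $A_N([\alpha,\beta),Y)\ge A_N([\alpha+\epsilon,\beta-\epsilon),X)-\overline{N}(\epsilon)\ge N(\beta-\alpha-2\epsilon)-N\,D_N(X)-\overline{N}(\epsilon)$. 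In either case $A_N([\alpha,\beta),Y)/N-(\beta-\alpha)\ge-2\epsilon-D_N(X)-\overline{N}(\epsilon)/N$. Combining this with the previous paragraph and taking the supremum over all $[\alpha,\beta)$ yields $D_N(Y)\le D_N(X)+2\epsilon+\overline{N}(\epsilon)/N$, and the symmetric argument finishes the proof. The only genuine care needed — the ``hard part,'' though it is minor — is the endpoint bookkeeping: truncating the enlarged interval to $[0,1)$ before invoking the definition of $D_N(X)$, keeping the half-open intervals consistent under the $\pm\epsilon$ shifts, and handling the degenerate case in which the shrunk interval is empty.
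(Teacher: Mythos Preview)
Your proof is correct and is essentially the standard argument for this lemma. Note that the paper does not actually prove this statement: it is quoted without proof from Kuipers and Niederreiter \cite{KuN}, and your interval-shifting argument (enlarging and shrinking $[\alpha,\beta)$ by $\epsilon$, absorbing the exceptional indices into $\overline{N}(\epsilon)$, and then using symmetry in $X$ and $Y$) is precisely the proof given there.
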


We can now prove the following theorem
\begin{thm}
If $Q$ is infinite in limit and computable and the sequence $(L_n)$ defined in the proof of Theorem \ref{thm:comp} is computable, then there is a computable real number in $\RNQ \cap \DNQ \setminus \NQ$.
\end{thm}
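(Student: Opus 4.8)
The plan is to combine the computable construction of Theorem~\ref{thm:comp} with a suitable composition of $\psi$-maps, just as in the two preceding theorems, but now arranging for the image to be simultaneously $Q$-ratio normal and $Q$-distribution normal while failing to be $Q$-normal. Following the pattern of \cite{AireyManceHDDifference}, I would first choose an auxiliary basic sequence $P=(p_n)$ that is slowly growing, infinite in limit, and computable (for instance $p_n=\max\{\lfloor q_n/2\rfloor,2\}$ as in the second theorem above, or a sparser variant), so that by Theorem~\ref{thm:comp} there is a computable $x_P\in\mathscr N(P)$ whose digits satisfy the genericity hypothesis $E_{P,n}<\min_{2\le r\le j}(q_{r,n}-1)$ infinitely often required by Theorem~\ref{thm:psipq}. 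I would then set $y=\Psi_j(x_P)$ for an appropriate chain $Q_1=P,\,Q_2,\dots,Q_j=Q$ of intermediate basic sequences, chosen so that (i) each $\psi$ in the chain is computable, hence $y$ is a computable real number, (ii) Theorem~\ref{thm:psipq} gives $N_n^{Q}(B,y)=N_n^{P}(B,x_P)+O(1)$ for every block $B$, which forces $y\in\RNQ$ but \emph{not} $y\in\NQ$ (because the ratio $N_n^Q(B,y)/Q_n(B)$ has the wrong limiting constant, exactly the mechanism in \cite{ppq1}), and (iii) the sequence $(T_{Q,n}(y))$ is still uniformly distributed mod $1$.

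The heart of the argument is point (iii): establishing $Q$-distribution normality of $y$. Here I would use Lemma~\ref{lem:DiscKuN} together with the discrepancy characterization of uniform distribution. The point is that $\psi_{P,Q}$ (and each step of the chain) changes each digit $E_n$ only by truncation to $\min\{E_n,q_n-1\}$, so $|T_{Q,n}(y)-T_{Q,n}(z)|$ can be controlled in terms of how often the digits actually get truncated, where $z$ is the image under the chain of a genuinely $Q$-distribution normal number — for which we can take the $x_Q$ produced by Theorem~\ref{thm:main} applied to $Q$ itself, or build distribution normality of $y$ directly. Concretely, I would compare the tail sums $T_{Q,n}(y)=\sum_{i\ge 1}E_{n+i}(y)/(q_{n+1}\cdots q_{n+i})$ against the corresponding tails of a distribution normal point, show that the per-coordinate errors $\epsilon_n$ are small except on a set of density zero (this uses that the digits of $x_P$ lie below $q_n-1$ often enough and that $P$ grows slowly relative to $Q$), and then invoke Lemma~\ref{lem:DiscKuN} with $\epsilon\to 0$ to conclude $D_N(T_{Q,1}(y),\dots,T_{Q,N}(y))\to 0$.

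The main obstacle I anticipate is reconciling the two requirements that pull in opposite directions: to kill $Q$-normality via Theorem~\ref{thm:psipq} we want the $\psi$-map to genuinely compress digits on a set of positive density (so that $Q_n(B)$ and $N_n^P(B,x_P)$ have different growth rates), yet to preserve $Q$-distribution normality we need the induced perturbation $|T_{Q,n}(y)-T_{Q,n}(\text{d.n.\ point})|$ to be negligible in the discrepancy sense. The resolution is that distribution normality only cares about the tail $\sum_{i\ge1}E_{n+i}/(q_{n+1}\cdots q_{n+i})$, which is dominated by the first few terms because the $q_i\ge2$; so a digit change at a single position contributes only $O(2^{-i})$ to the relevant tail, and as long as the positions where large digits of a distribution-normal point get truncated do not accumulate with positive density in a way that shifts the tail by a fixed amount, the discrepancy still vanishes. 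Making this precise — choosing $P$ sparse enough that truncation is ``rare on the scale of the tail'' but dense enough that ratio counts are perturbed — is exactly the delicate computation, and I would handle it by taking $P$ with $p_n$ equal to $q_n$ except on a set of indices of density zero along which $p_n$ is much smaller (e.g.\ $p_n=2$), so that $y$ differs from a distribution-normal $Q$-expansion only in a density-zero set of digits, while still $\RNQ\setminus\NQ$ because the omitted mass, though of density zero in \emph{positions}, is arranged to be non-negligible in the $Q_n(B)$ normalization via the slow-growth hypothesis. The computability of $y$ then follows routinely from the computability of $Q$, of $(L_n)$, and of the $\psi$-maps, exactly as in the proof of Theorem~\ref{thm:comp}.
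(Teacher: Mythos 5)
Your plan to reach $\RNQ\cap\DNQ\setminus\NQ$ through the $\psi$-maps and Theorem~\ref{thm:psipq} runs into a structural obstruction, and the paper in fact abandons that machinery for this theorem. If the auxiliary sequence $P$ is substantially smaller than $Q$ (your $p_n=\max\{\lfloor q_n/2\rfloor,2\}$, or anything with $p_n/q_n$ bounded away from $1$), then every digit of $y=\psi_{P,Q}(x_P)$ is at most $p_n-1$, so $T_{Q,n}(y)$ is trapped in roughly $[0,p_{n+1}/q_{n+1}]$ and can never be uniformly distributed; this is exactly the mechanism the paper uses to produce elements of $\NQ\setminus\DNQ$, so no discrepancy comparison via Lemma~\ref{lem:DiscKuN} can rescue point (iii). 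Your proposed fix --- take $p_n=q_n$ off a density-zero set and $p_n=2$ on it --- fails on two counts. First, the present theorem does \emph{not} assume $Q$ is slowly growing or satisfies the divergence hypothesis of Theorem~\ref{thm:main}; a $P$ that agrees with $Q$ on a set of density one inherits $Q$'s growth, so Theorem~\ref{thm:comp} gives you no computable $\mathscr{N}(P)$-number to start from. Second, even granting that, putting $p_n=2$ on the sparse set destroys ratio normality: blocks containing a digit $\geq 2$ can only occur off the sparse set (with counts of order $Q_n(B)$), while blocks with digits in $\{0,1\}$ pick up the inflated sparse-set counts, so the ratios $N_n^Q(B_1,y)/N_n^Q(B_2,y)$ do not tend to $1$.

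The paper's proof is a direct digit-level construction that resolves both issues at once. It fixes an auxiliary $P$ with $p_i=\lfloor\log i\rfloor+2$, which is slowly growing and infinite in limit \emph{independently of} $Q$, takes a computable $\xi\in\mathscr{N}(P)$ from Theorem~\ref{thm:comp}, and a computable uniformly distributed sequence $(x_n)$. It then copies the digits of $\xi$ into sparse blocks of positions $\{L_n,\dots,L_n+n-1\}$ (a set $S$ of density zero), and at all other positions sets $E_n=\max\{\lfloor x_nq_n\rfloor,\lceil\log i(n)\rceil\}$. The forced largeness $E_n\geq\lceil\log i(n)\rceil$ off $S$ guarantees that any fixed block occurs only finitely often outside $S$, so \emph{all} block counts come from the copied $\xi$-digits: since $P$ is infinite in limit and $\xi$ is $P$-normal, this yields $\RNQ$, while the quantities $\upsilon_{n,k}$ built into the definition of $L_n$ make $Q_n(B)=o\bigl(\sum_{i\le i(n)}P_{i-k+1}(B)\bigr)$, so $x\notin\NQ$. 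Distribution normality comes from $E_n/q_n$ tracking $x_n$ off the density-zero set $S$, via Lemma~\ref{lem:DiscKuN}. Your sketch gestures at the right tension (foreign statistics on a sparse set versus uniform distribution on its complement), but it is missing the two ingredients that make the paper's argument work: an auxiliary $P$ chosen slowly growing regardless of $Q$, and the largeness condition on the off-$S$ digits, without which either the computable input or the ratio-normality/non-normality dichotomy collapses.
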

\begin{proof}
Let $P = (p_i)$ with $p_i = \lfloor \log i \rfloor + 2$. Note that $P$ is slowly growing, computable, and the sequence $(p(n))$ is computable, so there is a computable real number $\xi \in \mathscr{N}(P)$ with $\xi = .F_1 F_2 \cdots \text{w.r.t } P$. Fix a computable sequence of real numbers $X = (x_n)$ that is uniformly distributed modulo 1 (for example the Farey sequence). Define the sequences
\begin{align*}
&\nu_n = \min\br{t : \frac{\sum_{i=0}^{n-1} \log q_{L_{n-1}+i}}{\sum_{i=0}^{j-L_{n-1}-1} \log q_{L_{n-1}+i}} < \frac{1}{n}, \forall j\geq t};\\
&\upsilon_{n,k} = \min \br{t : \frac{Q_n(B)}{\sum_{i=1}^j P_{i-k+1}(B) } < \frac{1}{n}, \forall j \geq t \text{ and blocks } B \text{ of length } k}; \\
&L_0 = 0; \\
&L_n = \max \br{ \min\br{t : \log(q_j) > n, \forall j \geq t}, L_{n-1} + n^2, L_{n-1}+\nu_n, \max_{k \leq n} \br{\upsilon_{n,k}}}
\end{align*}
and set $i(n) = \max \{j : L_j \leq n \}$. The sequence $(i(n))$ is computable since $(L_n)$ is a computable sequence. Note that $\nu_n$ and $\upsilon_{n,k}$ exist since $Q$ is infinite in limit and $P$ is fully divergent.
Define the set 
$$
S = \bigcup_{n=1}^\infty \{L_n, L_n+1, \cdots, L_n+n-1\}.
$$
Note that this set has density $0$ since 
$$
\frac{\sum_{i=1}^n i}{\sum_{i=1}^n L_{i}- L_{i-1}} \leq \frac{\sum_{i=1}^n i}{\sum_{i=1}^n i^2} \to 0 \hbox{ as $n$ goes to infinity.}
$$
Define the sequence
$$
E_n=
\begin{cases}
F_{n-L_i} &\text{if } n \in [L_i, L_i+1, \cdots, L_i+i]\\
\max \br{\floor{x_n q_n}, \ceil{\log i(n)}} & \text{otherwise}  
\end{cases}
$$
We claim the real number $x = \sum_{n=1}^\infty \frac{E_n}{q_1 \cdots q_n}$ is in $\RNQ \cap \DNQ \setminus \NQ$. Let $B$ be a block of length $k$. Note that by the definition of $L_n$, there are only finitely many values $n \in \N \setminus S$ such that $B$ occurs at position $n$ in the $Q$-Cantor series expansion of $x$. This is because all digits $E_n$ with $n \in \N \setminus S$ must be at least $\ceil{\log i(n)}$ and since $i(n)$ tends to infinity as $n$ does. Thus if $m$ is the maximum digit for the block $B$, we have that for $n \in \N \setminus S$ with $i(n) > m$ that $E_n > m$. Thus $N_n^Q(B,x) = \sum_{i=1}^{i(n)} N_{i-k+1}^P(B, \xi) + O(1)$. So for any two blocks $B_1$ and $B_2$ of length $k$, we have
\begin{align*}
\lim_{n \to \infty} \frac{N_n^Q(B_1,x)}{N_n^Q(B_2, x)} &= \lim_{n \to \infty} \frac{\sum_{i=1}^{i(n)} N_{i-k+1}^P(B_1, \xi) + O(1)}{\sum_{i=1}^{i(n)} N_{i-k+1}^P(B_2, \xi) + O(1)} \\
&= \lim_{n \to \infty} \frac{N_{n-k+1}^P(B_1, \xi)}{N_{n-k+1}^P(B_2, \xi)} = 1.
\end{align*}
Thus $x \in \RNQ$.

Consider the sequence $Y = \pr{\frac{E_n}{q_n}}$. For $n \in \mathbb{N} \backslash S$, we have $\left |\frac{E_n}{q_n} - x_n \right | < \frac{1}{q_n}$, which tends to $0$ as $n$ goes to infinity. We therefore have for $\epsilon>0$ that $\overline{N}(\epsilon) = O(1) + \# S \cap \{1, \cdots, N\}$. Thus by Lemma \ref{lem:DiscKuN}
$$
\left |D_N(X) - D_N(Y) \right | < 2 \epsilon + \frac{O(1)}{N} + \frac{\# S \cap \{1, \cdots, N\}}{N} < 3 \epsilon
$$
if $N$ is sufficiently large. Since the inequality holds for all $\epsilon>0$, we have that $\pr{\frac{E_n}{q_n}}$ is uniformly distributed mod 1. Thus $x \in \DNQ$.

Note that
$$
\lim_{n \to \infty} \frac{N_n^Q(B,x)}{\sum_{i=1}^{i(n)} P_{i-k+1}(B) } = 1.
$$
However, 
$$
\lim_{n \to \infty} \frac{Q_n(B)}{\sum_{i=1}^{i(n)} P_{i-k+1}(B) } = 0
$$
by the definition of $L_n$, so $x \not \in \NQ$.

Furthemore, the sequence $E_n$ is computable because the sequences $(F_n)$, $(L_n)$ and $(i(n))$ are all computable. Thus $x$ is a computable real number in $\RNQ \cap \DNQ \setminus \NQ$.

\end{proof}


\bibliographystyle{amsplain}
\providecommand{\bysame}{\leavevmode\hbox to3em{\hrulefill}\thinspace}
\providecommand{\MR}{\relax\ifhmode\unskip\space\fi MR }
\providecommand{\MRhref}[2]{%
  \href{http://www.ams.org/mathscinet-getitem?mr=#1}{#2}
}
\providecommand{\href}[2]{#2}



\end{document}